\documentclass{article}
\usepackage{maa-monthly}



\theoremstyle{theorem}
\newtheorem{theorem}{Theorem}

\theoremstyle{theorem}
\newtheorem{prop}{Proposition}
\theoremstyle{theorem}
\newtheorem{lemma}{Lemma}
\theoremstyle{theorem}
\newtheorem{corollary}{Corollary}

\theoremstyle{definition}

\begin{document}

\title{Using Dynamical Systems to Construct Infinitely Many Primes}
\markright{Dynamical systems and primes}
\author{Andrew Granville}

\maketitle

\begin{abstract}  Euclid's proof can be reworked to construct infinitely many primes, in many different ways, using ideas from arithmetic dynamics.
\end{abstract}

\section{Constructing infinitely many primes.}   \label{sec 1}

The first known proof that there are infinitely many primes, which appears in Euclid's \emph{Elements}, is a proof by contradiction. This proof does not indicate how to find primes, despite establishing that there are infinitely many of them. However a minor variant does do so. The key to either proof is the theorem that
\begin{center}
\emph{Every integer $q>1$ has a prime factor.}
\end{center}
(This was first formally proved by Euclid on the way to establishing the fundamental theorem of arithmetic.) 
The idea is to 
 \begin{center}
\emph{Construct an infinite sequence of distinct, pairwise coprime,  integers $a_0,a_1,\ldots$},
\end{center}
that is, a sequence for which gcd$(a_m,a_n)=1$ whenever $m\ne n$. We then obtain infinitely many primes,  the prime factors of the $a_n$, as proved in the following result.

\begin{prop} Suppose that $a_0,a_1,\ldots$ is an infinite sequence of distinct, pairwise coprime, integers. 
Let $p_n$ be a prime divisor of $a_n$ whenever $|a_n|>1$.   Then the $p_n$ form an infinite sequence of distinct primes.
\end{prop}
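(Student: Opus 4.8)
The plan is to check two things: that the primes $p_n$ are actually defined for infinitely many indices $n$, and that distinct indices yield distinct primes. Both follow quickly, but the first needs a word of care since the construction produces a prime $p_n$ only when $|a_n|>1$.

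First I would note that because $a_0,a_1,\ldots$ are \emph{distinct} integers, at most three of them can satisfy $|a_n|\le 1$, these being the only integers $-1,0,1$ of absolute value at most one. Hence $|a_n|>1$ for all but finitely many $n$, and for each such $n$ the theorem quoted above --- every integer $q>1$ has a prime factor, applied with $q=|a_n|$ --- guarantees that a prime divisor $p_n$ exists. So the set of indices on which $p_n$ is defined is infinite.

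Next I would show the $p_n$ are pairwise distinct. Suppose $p_m=p_n$ for some $m\ne n$, and call this common prime $p$. Then $p\mid a_m$ and $p\mid a_n$, so $p$ divides $\gcd(a_m,a_n)$; but by hypothesis $\gcd(a_m,a_n)=1$, and the prime $p>1$ cannot divide $1$, a contradiction. Thus $n\mapsto p_n$ is injective on its (infinite) domain, so the $p_n$ form an infinite sequence of distinct primes, establishing the proposition.

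There is no genuine obstacle here; the only step demanding a moment's thought is the first one, where one must rule out the construction stalling because too many of the $a_n$ are $0$ or units --- and distinctness of the $a_n$ disposes of that immediately.
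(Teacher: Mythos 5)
Your proof is correct and matches the paper's argument in substance; you simply present the two steps (infinitely many $p_n$ are defined; they are pairwise distinct) in the opposite order from the paper, which handles distinctness first. Both rely on exactly the same observations: distinctness of the $a_n$ limits $\{-1,0,1\}$ to at most three occurrences, and a shared prime would divide $\gcd(a_m,a_n)=1$.
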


\begin{proof}  The $p_n$ are distinct for if not, then $p_m=p_n$ for some $m\ne n$ and so
\[
p_m= \textrm{gcd} (p_m,p_n) \ \textrm{divides} \  \textrm{gcd} (a_m,a_n) =1,
\]
a contradiction. As the $a_n$ are distinct, any given value (in particular, $-1, 0$ and $1$) can be attained at most once, and therefore all but at most three of the $a_n$ have absolute value $>1$, and so have a prime factor $p_n$.
\end{proof}

To construct such a sequence by modifying  Euclid's proof, let $E_0=2$ and 
\begin{center} 
$E_{n}=E_0E_1\cdots E_{n-1}+1$ for each $n\geq 1$. 
\end{center}
If $m<n$, then
$E_m$ divides $E_0E_1\cdots E_{n-1}=E_n-1$ (as $E_m$ is one of the terms in the product) and so gcd$(E_m,E_{n})$ divides gcd$(E_n-1,E_{n})=1$, which implies that 
gcd$(E_m,E_{n})=1$.   Therefore, if $p_n$ is a prime divisor of $E_n$ for each $n\geq 0$, then $p_0,p_1,\dots$ is an  infinite sequence of distinct primes.

The \emph{Fermat numbers}, $F_0,F_1,\ldots$, defined by $F_n=2^{2^n}+1$ for each $n\geq 0$, are a  more familiar sequence of pairwise coprime integers.
Fermat had actually conjectured that the  $F_n$ are all primes. His claim starts off correct: $3, 5, 17, 257, 65537$
are all prime, but is false for $F_5=641\times 6700417$, as Euler
famously noted. It is an open question as to whether there are infinitely many primes of the form $F_n$.\footnote{The only Fermat numbers known to be primes  have $n\leq 4$. We know that the  $F_n$
are composite for $5\leq n\leq 30$ and for many other $n$ besides. It is
always a significant moment when  a Fermat number is factored for
the first time. It could be that all $F_n$ with $n>4$ are composite, or they might all be prime from some sufficiently large $n$ onwards, or some might be prime and some composite. Currently, we have no way of knowing which is  true.} Nonetheless we can prove that the $F_n$ are pairwise coprime in a similar way to the $E_n$, using the identity
\begin{center} 
$F_{n}=F_0F_1\cdots F_{n-1}+2$ for each $n\geq 1$
\end{center}
and the fact that the $F_n$ are all odd. We   deduce from Proposition 1 that 
if $p_n$ is a prime divisor of $F_n$ for each $n\geq 0$, then $p_0,p_1,\dots$ is an  infinite sequence of distinct primes.\footnote{Goldbach was the first to use the prime divisors of the Fermat numbers to prove that there are infinitely many primes,  in a letter  to Euler in late July 1730.}

For any sequence of integers $(a_n)_{n\geq 0}$ we call $p_n$ a \emph{private prime factor} of $a_n$ if it divides $a_n$ and no other $a_m$. The primes $p_n$ constructed in Proposition 1 are therefore each private prime factors of the $a_n$.

\section{A simpler formulation.}   \label{sec 2}

In Section \ref{sec 1}, the  sequences $(E_n)_{n\geq 0}$ and $(F_n)_{n\geq 0}$  were constructed in a similar  way, multiplying all of the terms of the sequence so far together and adding a constant. We would like a simpler, unified way to view these two sequences, with an eye to generalization. This is not difficult because the recurrence for the $E_n$ can be rewritten as
\[
E_{n+1}=E_0E_1\cdots E_{n-1}\cdot E_n+1 = (E_n-1)E_n+1 = f(E_n) ,
\]
where $f(x)=x^2-x+1$. Similarly the  $F_n$-values can be determined by  
\[
F_{n+1}= (2^{2^n}+1)(2^{2^n}-1)+2 = F_n(F_n-2)+1 = f(F_n) ,
\]
where $f(x)=x^2-2x+2$. So they are both examples of  sequences $(x_n)_{n\geq 0}$  for which 
\[
x_{n+1} = f(x_n)
\]
for some polynomial $f(x)\in \mathbb Z[x]$. The terms of the sequence are all given by the recursive formula, so that
\[ 
x_n = \underbrace{f( f(\ldots f}_{n \ \text{times}}(x_0))) = f^n(x_0),
\]
where the notation $f^n$ denotes the polynomial obtained by composing $f$ with itself $n$ times (which is definitely \emph{not} the $n$th power of $f$). Any such sequence $(x_n)_{n\geq 0}$ is called the \emph{orbit} of $x_0$ under the map $f$, since the sequence is completely determined once one knows $x_0$ and $f$. We  sometimes  write the orbit as
$x_0\to x_1\to x_2\to \cdots .$

The key to the proof that the $E_n$'s are pairwise coprime is that $E_n\equiv 1 \pmod {E_m}$ whenever $n>m\geq 0$; and
the key to the proof that the $F_n$'s are pairwise coprime is that $F_n\equiv 2 \pmod {F_m}$ whenever $n>m\geq 0$. 
These congruences are not difficult to   deduce using  the following lemma from elementary number theory.

\begin{lemma}\label{congs} Let $g(x)$ be a polynomial with integer coefficients. For any integers $a\ne b$, their difference $a-b$ divides $g(a)-g(b)$. Moreover, if $m$ is an integer  for which $a\equiv b \pmod m$, then $g(a)\equiv g(b)\pmod m$.
\end{lemma}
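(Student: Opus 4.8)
The plan is to prove the two assertions in turn, starting from the familiar algebraic identity for the factorization of $a^k - b^k$. First I would write $g(x) = \sum_{k=0}^d c_k x^k$ with each $c_k \in \mathbb{Z}$, so that
\[
g(a) - g(b) = \sum_{k=0}^d c_k (a^k - b^k).
\]
For each $k \ge 1$ one has the standard factorization $a^k - b^k = (a-b)\sum_{j=0}^{k-1} a^{j} b^{k-1-j}$, and the $k=0$ term vanishes; hence every summand is divisible by $a - b$, and therefore so is $g(a) - g(b)$. This gives the first claim, and note it holds for all integers $a \ne b$ (indeed for $a = b$ too, trivially).

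For the second assertion, suppose $a \equiv b \pmod m$, i.e.\ $m \mid a - b$. By the first part $a - b \mid g(a) - g(b)$, and divisibility is transitive, so $m \mid g(a) - g(b)$, which is exactly $g(a) \equiv g(b) \pmod m$. Alternatively, and perhaps more in the spirit of a \emph{Monthly} exposition, one can argue directly from the ring-homomorphism property of reduction mod $m$: if $a \equiv b$, then $a^k \equiv b^k$ for every $k \ge 0$ by induction on $k$ (multiplying the congruence $a^{k-1} \equiv b^{k-1}$ by $a \equiv b$), and then $c_k a^k \equiv c_k b^k$, and summing over $k$ gives $g(a) \equiv g(b)$.

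There is no real obstacle here — the lemma is entirely elementary — so the only thing to be careful about is not to conflate the two statements: the first is a genuine divisibility in $\mathbb{Z}$ (with the specific divisor $a-b$), while the second is its immediate corollary once one also knows $m \mid a - b$. I would present the factorization identity explicitly since it is the one computational ingredient, and then dispatch the congruence statement in a single line by transitivity of divisibility. If brevity is wanted, the whole proof is: expand $g$, use $a-b \mid a^k - b^k$ termwise for the first part, and combine with $m \mid a-b$ for the second.
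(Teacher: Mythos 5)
Your proof is correct and is the standard elementary argument via the factorization $a^k - b^k = (a-b)\sum_{j=0}^{k-1} a^j b^{k-1-j}$; the paper simply states this lemma as a known fact ``from elementary number theory'' and gives no proof, so there is nothing to compare against. One tiny nit: you observe in passing that the divisibility $a-b \mid g(a)-g(b)$ ``holds for $a=b$ too, trivially,'' but the lemma deliberately assumes $a\ne b$ precisely so that $a-b$ is a nonzero integer and the divisibility statement is the usual one; it is cleaner to keep that hypothesis rather than stretch the meaning of divisibility to the degenerate case $0\mid 0$.
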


For $f(x)=x^2-x+1$, the orbit of $0$  under the map $f$ is 
$0\to 1\to 1\to \cdots .$ 
Therefore if $n>m\geq 0$, then we have
\[
E_{n}=f^{n-m}(E_m) \equiv f^{n-m}(0)=1  \pmod {E_m},
\]
and so $(E_n,E_m)=(1,E_m)=1$.
(For clarity, we have used an ``=" sign when two numbers are equal despite working with a congruence.
Typically we use the notation $(a,b)$ rather than gcd$(a,b)$.)
Similarly if $f(x)=x^2-2x+2$, then the orbit of $0$  under the map $f$ is 
$0\to 2\to 2\to \cdots$ and so  
$F_{n}=f^{n-m}(F_m) \equiv f^{n-m}(0) =2  \pmod {F_m}$, which implies that if $n>m$, then
$(F_n,F_m)=(2,F_m)=1$.

This reformulation of two of the best-known proofs of the infinitude of primes hints at the possibility of a more general approach.

\section{Different starting points.}  \label{sec 3}

We just saw that the orbit of $2$  under the map $x\to  x^2-x+1$ is an infinite sequence of pairwise coprime integers, $2\to 3\to 7\to 43\to 1807 \to \cdots .$ What about other orbits? The orbit of $4$ is also an infinite sequence of pairwise coprime integers beginning $4\to 13\to 157\to 24493\to \cdots$, as is the orbit of $5$ which begins $5\to 21 \to 421 \to \cdots .$ The same proof as before yields that no two integers in a given orbit have a common factor.

 The orbit of $3$  under the  the map $x\to  x^2-2x+2$ yielded the Fermat numbers $3\to 5\to 17\to 257 \to \cdots$ , but starting at $4$ we get $4\to 10\to 82\to 6562\to \cdots .$ These are obviously not pairwise prime as every number in the orbit is even, but if we divide through by $2$, then we get
\[ 2\to 5\to 41\to 3281\to \cdots \]
which are pairwise coprime. To prove this, note that  $x_0=4$ and $x_{n+1}= x_n^2-2x_n+2$ for all $n\geq 0$, and so the above proof yields that if $m<n$ then $(x_m,x_n)=(x_m,2)=2$. Therefore, taking 
$a_n=x_n/2$ for every $n$ we deduce that $(a_m,a_n)=(x_m/2,x_n/2) =(x_m/2,1)=1$.
This same idea works for every orbit under this map: we get an infinite sequence of pairwise coprime integers by dividing through by 1 or 2, depending on whether $x_0$ is odd or even.

However, things can be more complicated: Consider the orbit of  $x_0=3$ under the map $x\to x^2-6x-1$.  We have
\[
3 \to -10 \to 159 \to 24326 \to 591608319 \to \cdots .
\]
Here $x_n$ is divisible by $3$ if $n$ is even, and is divisible by $2$ if $n$ is odd. If we let $a_n=x_n/3$ when $n$ is even, and $a_n=x_n/2$ when $n$ is odd, then one can show the terms of the resulting sequence,
\[
1\to -5 \to 53 \to 12163 \to 197202773 \to \cdots ,
\]
are indeed pairwise coprime. 

Another surprising example is given  by the orbit   of $6$ under the map $x\to 7+x^5(x-1)(x-7)$. Reducing the elements of the orbit mod $7$ we find that
\[
6 \to 5 \to 4\to 3\to 2\to 1\to 0\to 0\to \cdots  \ \pmod 7,
\]
as $x^5(x-1)(x-7)\equiv x^6(x-1) \pmod 7$, which is $\equiv x-1 \pmod 7$ if $x\not\equiv 0 \pmod 7$ by Fermat's little theorem. So   $x_n$ is divisible by $7$ for every $n\geq 6$, but for no smaller $n$, and to obtain the pairwise coprime  $a_n$ we let $a_n=x_n$ for $n\leq 5$, and  $a_n=x_n/7$ once $n\geq 6$.

It starts to look as though it might become complicated to formulate how to define a sequence $(a_n)_{n\geq 0}$  of pairwise coprime integers in general; certainly a  case-by-case description is unappealing. However, there is a simpler way to obtain the $a_n$: In  these last two examples we have  $a_n=x_n/(x_n,6)$ and then $a_n=x_n/(x_n,7)$, respectively, for all $n\geq 0$, a description that will generalize well.

\section{Dynamical systems and the infinitude of primes.}   \label{sec 4}

One models evolution by  determining the future development of the object of study   from   its current state (more sophisticated models incorporate the possibility of random mutations). This gives rise to  \emph{dynamical systems}, a rich and bountiful area of study. One simple model is that the state of the object at time $n$ is denoted by $x_n$, and given an initial state $x_0$, one can find subsequent states via a map $x_n\to f(x_n)=x_{n+1}$ for some given function $f(.)$. 
Orbits of linear polynomials $f(.)$  are  easy  to understand,\footnote{One can verify: if $f(t)=at+b$, then $x_n=x_0+nb$ if $a=1$, and $x_n=a^nx_0+b/(1-a)$ if $a\ne 1$.} but   quadratic polynomials  can give rise to evolution that is very far from what one might naively guess (the reader might look into the extraordinary Mandelbrot set).

This is  the set-up that we had above! For us, $f(x)$ is a polynomial with integer coefficients and $x_0$  an integer. It will be useful to use  dynamical systems terminology.

If $f^n(\alpha)=\alpha$ for some integer $n\geq 1$, then (the orbit of)  $\alpha$ is \emph{periodic}, and   the smallest such  $n$ is the \emph{exact period} length for $\alpha$. The orbit  begins with  the \emph{cycle} 
\[
\alpha, f(\alpha),\ldots, f^{n-1}(\alpha)
\]
of distinct values, and   repeats itself, so that $f^n(\alpha)=\alpha, f^{n+1}(\alpha)=f(\alpha),\ldots$, and, in general  $f^{n+k}(\alpha)=f^{k}(\alpha)$ for all $k\geq 0$.

The number $\alpha$ is  \emph{preperiodic} if  $f^m(\alpha)$ is periodic for some $m\geq 0$, and is 
\emph{strictly preperiodic} if $\alpha$ is preperiodic but not itself periodic. In all of our examples so far, $0$ has been strictly preperiodic. In fact, if any two elements of the orbit of $\alpha$ are equal, say $f^{m+n}(\alpha)=f^{m}(\alpha)$,  then $f^{k+n}(\alpha)=f^{k-m}(f^{m+n}(\alpha))=f^{k-m}(f^{m}(\alpha))=f^{k}(\alpha)$ for all $k\geq m$, so that $\alpha$ is preperiodic.

Finally,  $\alpha$ has a \emph{wandering orbit} if it is not preperiodic, that is, if its orbit never repeats itself so that the $\{ f^m(\alpha)\}_{m\geq 0}$ are all distinct. Therefore, we wish to start only with integers $x_0$ that have wandering orbits.

We now state our general result for constructing    infinitely many primes from orbits of a polynomial map.

\begin{theorem}  \label{1st result}
Suppose that $f(x)\in \mathbb Z[x]$, and that $0$ is a strictly preperiodic point of the map $x\to f(x)$. 
Let $\ell(f)=\text{\rm lcm}[f(0),f^2(0)]$. 
For any integer $x_0$ that has a wandering orbit, $(x_n)_{n\geq 0}$, let
\[    a_n= \frac{x_n}{\text{\rm gcd}(x_n,\ell (f))} \text{ \ \ for all \ \ } n\geq 0.\]
The $(a_n)_{n\geq 0}$ are an infinite sequence of pairwise coprime integers, and if $n\geq 3$  then $a_n$  has a private prime factor.
\end{theorem}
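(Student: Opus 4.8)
The plan is to show that the $a_n$ are pairwise coprime by a congruence argument like the ones in Sections 2 and 3, and then to extract a private prime factor from each $a_n$ with $n\ge 3$ by invoking Proposition 1 together with a size argument. First I would analyze the structure of the orbit of $0$. Since $0$ is strictly preperiodic, there is a smallest $m\ge 1$ with $f^m(0)$ periodic, and the tail of the orbit of $0$ eventually lands in a cycle. The key observation I want is that for every $n>k\ge 0$ the value $f^{n-k}(0)$ lies among the finitely many values $\{f(0),f^2(0),\dots\}$, and in fact, because $0$ is strictly preperiodic, every element $f^j(0)$ with $j\ge 1$ divides $\ell(f)=\mathrm{lcm}[f(0),f^2(0)]$ — this needs a short argument handling the cycle, possibly showing the orbit of $0$ has the shape $0\to c_1\to c_2\to\cdots$ where the relevant gcd information is already captured by $f(0)$ and $f^2(0)$ because the cycle is reached quickly (one should check the cycle length and preperiod are small enough, or argue directly that each $f^j(0)\mid\ell(f)$). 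Granting this, Lemma 1 gives, for $n>k\ge 0$,
\[
x_n = f^{n-k}(x_k)\equiv f^{n-k}(0)\pmod{x_k},
\]
so $\gcd(x_n,x_k)$ divides $\gcd\bigl(f^{n-k}(0),x_k\bigr)$, which divides $\gcd(\ell(f),x_k)$.

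Next I would use this to prove the $a_n$ are pairwise coprime. Write $d_k=\gcd(x_k,\ell(f))$, so $a_k=x_k/d_k$. The divisibility $\gcd(x_n,x_k)\mid\gcd(\ell(f),x_k)=d_k$ for $n>k$ means that any prime power dividing both $x_n$ and $x_k$ already divides $d_k$, i.e. is "absorbed" in passing from $x_k$ to $a_k$. One must be a little careful with prime powers: I would argue at the level of a fixed prime $p$, comparing $v_p(x_n)$, $v_p(x_k)$, and $v_p(d_k)=\min(v_p(x_k),v_p(\ell(f)))$, and show $\min(v_p(a_n),v_p(a_k))=0$. This follows because $v_p(\gcd(x_n,x_k))\le v_p(d_k)$ forces $\min(v_p(x_n),v_p(x_k))\le v_p(d_k)$; if this minimum is $v_p(x_k)$ then $v_p(a_k)=v_p(x_k)-v_p(d_k)=0$ (using $v_p(x_k)\le v_p(d_k)$ hence $=v_p(d_k)$), and if it is $v_p(x_n)$ then $v_p(x_n)\le v_p(d_k)\le v_p(\ell(f))$, so $v_p(d_n)=v_p(x_n)$ and $v_p(a_n)=0$. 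Hence $\gcd(a_n,a_k)=1$. I would also need to note the $a_n$ are genuinely integers (immediate, since $d_n\mid x_n$) and that they are distinct: they are pairwise coprime, and at most finitely many (in fact at most three, as in Proposition 1) can have absolute value $\le 1$, since $|a_n|=|x_n|/d_n$ with $d_n\le|\ell(f)|$ fixed while $|x_n|\to\infty$ along the wandering orbit — wait, I should instead simply note distinctness follows from pairwise coprimality plus the fact that a repeated value $a_m=a_n$ with $|a_m|>1$ would violate coprimality, and values in $\{-1,0,1\}$ occur at most thrice.

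Finally, for the private prime factors: by Proposition 1, choosing any prime $p_n\mid a_n$ whenever $|a_n|>1$ gives distinct primes, and distinctness is exactly the statement that $p_n$ divides no other $a_m$ — so I just need $|a_n|>1$ for all $n\ge 3$. Here is where the wandering hypothesis does real work: since the orbit $(x_n)$ is wandering, the integers $x_n$ are all distinct, so at most three of them lie in $\{-1,0,1\}$; more strongly $|x_n|$ cannot stay bounded, so for all large $n$ we have $|x_n|>|\ell(f)|\ge d_n$, giving $|a_n|>1$. Pinning down the explicit threshold $n\ge 3$ is the one genuinely fiddly point: I expect the argument is that among $x_0,x_1,x_2,x_3$ at most values with small absolute value can cause trouble, and one shows that once the orbit has taken three steps it has escaped the range where $|x_n|\le|\ell(f)|$ — perhaps using that $f$ has degree $\ge 2$ and a monotonicity/growth estimate $|f(x)|>|x|$ for $|x|$ large, together with checking that the orbit is already large by index $3$. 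I anticipate this final size bookkeeping — proving $|a_n|>1$ precisely for $n\ge3$ rather than merely for $n$ large — to be the main obstacle, whereas the coprimality half is a routine upgrade of the Section 2–3 computations via Lemma 1.
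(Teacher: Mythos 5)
Your coprimality argument is essentially correct and equivalent to the paper's, just carried out at the level of $p$-adic valuations; the paper instead writes $A_m=x_m/(x_m,L(f))$, observes $(x_m,x_n)\mid L(f)$, so $A_m\mid x_m/(x_m,x_n)$ and $A_n\mid x_n/(x_m,x_n)$, and concludes $(A_m,A_n)=1$ because $x_m/(x_m,x_n)$ and $x_n/(x_m,x_n)$ are coprime. Either bookkeeping works. You also correctly identify and defer the fact that every $f^j(0)$ ($j\geq 1$) divides $\ell(f)=\mathrm{lcm}[f(0),f^2(0)]$; the paper does the same, proving $L(f)=\ell(f)$ in Section~\ref{sec 5} via a short classification of the four possible shapes of the orbit of $0$ (which in turn rests on Lemma~\ref{PeriodLength}, that period lengths are $1$ or $2$).

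The genuine gap is the claim $|a_n|>1$ for all $n\geq 3$, and your sketch of how to close it will not work. A growth estimate ``$|f(x)|>|x|$ for $|x|$ large'' plus distinctness of the $x_n$ only shows the orbit eventually escapes any fixed ball; distinctness alone bounds the number of $n$ with $x_n\mid\ell(f)$ by roughly the number of divisors of $\ell(f)$, not by $3$, and a priori there is no size obstruction to the orbit bouncing through many small divisors of $\ell(f)$ before escaping. (Relatedly, your remark that ``values in $\{-1,0,1\}$ occur at most thrice'' among the $a_n$ is unjustified: the $a_n$ are not a priori distinct, and repeated $\pm 1$ values do not violate pairwise coprimality. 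What bounds repeats is distinctness of the underlying $x_n$, which gives the weaker divisor bound above.) The paper's actual argument (Appendix, Section~\ref{app A}) is a divisibility case analysis, not a size one: in the case $0\to a\to a$, say, one uses that $x(x-a)$ divides $f(x)-a$, so that if $x_{n+2}=d\mid a$ then $x_{n+1}(a-x_{n+1})\mid a-d$ and $x_n(a-x_n)\mid x_{n+1}-a$, and chasing these constraints forces the handful of exceptional triples $x_n\to x_{n+1}\to x_{n+2}$ to sit at the very start of the orbit. This has to be done for each of the four orbit types for $0$; the paper's example $f(x)=3-x(x-3)^2$ with wandering orbit $2\to 1\to -1\to\cdots$ shows the threshold $n\geq 3$ is sharp, and hence must come out of structural arithmetic constraints rather than asymptotic growth.
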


For example, $\ell(f)=1,2,6$ and $7$, respectively, for the four polynomials $f(x)$ in the examples of Section \ref{sec 3}.
 
\begin{proof} Suppose that $k=n-m>0$. Then, by Lemma \ref{congs}, 
 \[
x_n = f^{k}(x_m) \equiv f^k(0) \pmod {x_m},
\]
and so gcd$(x_m,x_n)$ divides gcd$(x_m,f^{k}(0))$, which divides $f^{k}(0)$.
But this divides 
\[
L(f):= \text{\rm lcm} [ f^k(0):\ k\geq 1 ] ,
\]
which is the lcm of a finite number of nonzero integers, as $0$ is preperiodic.
Therefore $(x_m,x_n)$ divides $L(f)$, and so 
$(x_m,x_n)$ divides both $(x_m,L(f))$ and $(x_n,L(f))$. This implies that
$A_m:=x_m/(x_m,L(f))$ divides $x_m/(x_m,x_n)$, and $A_n$ divides $x_n/(x_m,x_n)$. But $x_m/(x_m,x_n)$ and $x_n/(x_m,x_n)$ are pairwise coprime which therefore implies that $(A_m,A_n)=1$.

We will show below that $L(f)=\ell(f)$, that is, the lcm of all the elements of the orbit of $0$ is the same as the lcm of the first two terms. This then implies that $a_n=A_n$ for all $n$.

One can also prove that $|a_n|>1$ for all $n\geq 3$ (see the discussion of one case in  Section \ref{app A}), and all such $n$ must have a private prime factor.
The result follows.
\end{proof}

The example $f(x)= 3-x(x-3)^2$  with $0\to 3\to 3$ has a wandering orbit  $2\to 1\to -1\to\cdots$, so that if $x_0=2$ then $x_1=1$ and $x_2=-1$. Therefore  we cannot in general improve the lower bound, $n\geq 3$, in Theorem \ref{1st result}.

We now determine all polynomials $f$  that satisfy the hypothesis of  Theorem \ref{1st result}.

\section{Polynomial maps for which $0$ is strictly preperiodic.}  \label{sec 5}

There are severe restrictions on the  possible exact periods.

\subsection{If $0$ is strictly preperiodic, then its exact period length is either one or two.}  
We have already seen the examples $x^2-x+1$ for which $0\to1\to1$, and 
$x^2-6x-1$ for which $0\to-1\to6\to -1$, where $0$ is preperiodic with period length one and two, respectively. In fact exact periods cannot be any larger:

\begin{lemma}  \label{PeriodLength}
Let $f(x)\in \mathbb Z[x]$. If the orbit of $a_0$ is periodic, then its exact period length is either one or two.
\end{lemma}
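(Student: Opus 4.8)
The plan is to exploit the divisibility property of Lemma~\ref{congs}: for a polynomial $f\in\mathbb Z[x]$ and any integers $a,b$, the difference $a-b$ divides $f(a)-f(b)$. Suppose $a_0$ has exact period length $n$, with cycle $a_0\to a_1\to\cdots\to a_{n-1}\to a_0$ (indices mod $n$). Applying Lemma~\ref{congs} along consecutive steps, $a_{i}-a_{i+1}$ divides $f(a_i)-f(a_{i+1})=a_{i+1}-a_{i+2}$ for every $i$. Chaining this around the cycle, each consecutive difference $d_i:=a_i-a_{i+1}$ divides the next one $d_{i+1}$, and after going all the way around we get $d_0\mid d_1\mid\cdots\mid d_{n-1}\mid d_0$. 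Since the $a_i$ are distinct, none of the $d_i$ is zero, so this forces $|d_0|=|d_1|=\cdots=|d_{n-1}|$: all consecutive differences have the same absolute value, call it $d>0$.

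Next I would extract a contradiction when $n\ge 3$. The key point is that the $d_i$ sum to zero around the cycle, $\sum_{i=0}^{n-1} d_i = \sum (a_i-a_{i+1}) = 0$, and each $d_i=\pm d$. Dividing by $d$, we have a sum of $n$ terms each equal to $\pm 1$ that vanishes; in particular $n$ must be even, and exactly $n/2$ of the signs are $+$ and $n/2$ are $-$. If $n=2$ we are done (that's the allowed case), so assume $n\ge 3$, hence $n\ge 4$. Now I would use that the signs cannot all alternate is not yet enough — instead, look more closely: since not all $d_i$ can have the same sign (the sum is zero) but consecutive ones are equal in absolute value, there must be an index where $d_i$ and $d_{i+1}$ have opposite signs, i.e. $a_{i+1}-a_i$ and $a_{i+1}-a_{i+2}$ have the same sign, meaning $a_{i+1}$ is a strict local extremum of the cyclic sequence with $a_i=a_{i+2}$ — but that contradicts the distinctness of the $a_j$ once $n\ge 3$. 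So the only way to avoid a repeated value is $a_i = a_{i+2}$ being allowed, which happens precisely when $n\le 2$.

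The main obstacle is making the sign/extremum argument airtight: one has to rule out a ``staircase'' pattern of signs that never creates an immediate repeat. I expect the clean way is: whenever $d_i$ and $d_{i+1}$ have the same sign we have $a_{i+2}\ne a_i$ and the sequence moves monotonically; whenever they have opposite sign, $a_i=a_{i+2}$. Since the $a_j$ are distinct and $n\ge 3$, the latter never happens, so \emph{all} the $d_i$ have the same sign — but then their sum cannot be zero, contradiction. Hence $n\le 2$, and since $n=1$ and $n=2$ both genuinely occur (the examples $x^2-x+1$ and $x^2-6x-1$), the bound is sharp. I would present this as the whole proof, noting that the one subtlety to check carefully is the equivalence ``$d_i,d_{i+1}$ opposite signs $\iff a_i=a_{i+2}$,'' which follows immediately from $d_i=\pm d$, $d_{i+1}=\pm d$ and $d_i - d_{i+1} = a_i - a_{i+2}$ together with $d_i+d_{i+1}=a_i-a_{i+2}$ in the respective cases.
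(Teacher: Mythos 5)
Your argument is correct and follows essentially the same route as the paper's proof: chain Lemma~\ref{congs} around the cycle to show the consecutive differences $d_i = a_i - a_{i+1}$ all have the same absolute value, then play the telescoping identity $\sum_i d_i = 0$ against the observation that a sign change between $d_i$ and $d_{i+1}$ forces $a_i = a_{i+2}$. Your finish is a touch more direct than the paper's, which reaches the same conclusion by applying $f^{N+1-j}$ to the relation $a_{j+1}=a_{j-1}$ to get $a_2=a_0$; you instead cite distinctness of the cycle elements at $a_i=a_{i+2}$, which works just as well once $n\ge 3$. One small slip to fix: in your closing sentence you assert both $d_i - d_{i+1} = a_i - a_{i+2}$ and $d_i + d_{i+1} = a_i - a_{i+2}$, but only the second is the telescoping identity (the first equals $a_i - 2a_{i+1} + a_{i+2}$); the second alone, together with $|d_i|=|d_{i+1}|$, gives the equivalence ``opposite signs $\iff a_i=a_{i+2}$'' that your argument actually relies on.
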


 \begin{proof}  
Let $N$ be the exact period length so that  $a_N=a_0$, and then $a_{N+k}=f^k(a_N)=f^k(a_0)=a_k$ for all $k\geq 1$.
Now assume that  $N>1$ so that $a_1\ne a_0$. Lemma \ref{congs} implies that    $a_{n+1}-a_n$ divides  $f(a_{n+1})-f(a_n)=a_{n+2}-a_{n+1}$ for all $n\geq 0$. Therefore,
\begin{center}
$a_1-a_0$ divides $a_2-a_1$, which divides $a_3-a_2,  \ldots,$ which divides $a_N-a_{N-1}=a_0-a_{N-1}$;  
and this divides $a_1-a_{N}=a_1-a_{0}$, 
\end{center}
the nonzero number we started with. We deduce that 
\[
|a_1-a_0|\leq |a_2-a_1|\leq \cdots \leq  |a_{j+1}-a_j|\leq \cdots \leq |a_1-a_0|,
\]
and so these are all equal. Therefore there must be some $j\geq 1$ for which $a_{j+1}-a_j = -(a_j-a_{j-1})$, else each 
$a_{j+1}-a_j =a_j-a_{j-1}=\cdots =a_1-a_0$ and so
\[
0=a_N-a_0 = \sum_{j=0}^{N-1} (a_{j+1}-a_j ) = \sum_{j=0}^{N-1} (a_1-a_0) = N(a_1-a_0) \ne 0,
\]
a contradiction. 
Therefore $a_{j+1}=a_{j-1}$, and we  deduce that
\[
a_2=a_{N+2}=f^{N+1-j}(a_{j+1})=f^{N+1-j}(a_{j-1})=a_N=a_0,
\]
as desired. 
\end{proof}

To determine which polynomials can be used in Theorem \ref{1st result},
we need to understand the possible orbits of $0$.

\subsection{A classification of when $0$ is a strictly preperiodic point.} 
There are just four possibilities for the period length and the preperiod length of the orbit of $0$. 
We now give examples of each type.
For some nonzero integer $a$, the orbit of $0$ is one of: \smallskip
 
 \noindent $\boxed{ 0\to a\to a\to \cdots }$ \ \ for $x\to x^2-ax+a$ \ (e.g., $(a_n)_{n\geq 0}$ and $(F_n)_{n\geq 0}$); or

 \vskip.06in
 
 \noindent $\boxed{ 0\to -a\to a\to a \to \cdots } $ \ \ for $a=1$ or $2$, with $x\to 2x^2-1$ or $x^2-2$, 
 \vskip-.1in
 \hfill respectively; or
 
  \noindent $\boxed{ 0\to -1 \to a\to -1\to  \cdots } $
 \ \ with   $f(x)=x^2-ax-1$; or

 \vskip.06in
 
 \noindent $\boxed{ 0\to 1\to 2\to -1\to 2 \to -1 \to \cdots } $
\ \ with  $f(x)=1+x+x^2-x^3$; and

\smallskip

\noindent all other possible orbits are obtained from these, by using the observation that  if $0\to a\to b \to \cdots$ is an orbit for $x\to f(x)$, then $0\to -a\to -b \to \cdots$ is an orbit for $x\to g(x)$ where $g(x)=-f(-x)$. 

It is not difficult to determine all polynomials $f$ for which $0$ has one of these orbits. For example, in the first case above we simply need to find all $f$ for which $f(0)=f(a)=a$. One example is $f_0(x)=a$ so that $f$ is another example if and only if  $f-f_0$ has roots at $0$ and $a$. That is, $f(x)-f_0(x)$ equals $x(x-a)g(x)$ for some $g(x)\in \mathbb Z[x]$. The analogous approach works in all of the other   cases.
 \medskip
 
Proving these are the only possible orbits is not difficult using Lemma \ref{congs}, but a little tedious. To give an example, let's 
determine the possible orbits of $0$ with two elements in the preperiod and period length $1$; that is, of the shape $0\to b\to a \to a\to \cdots$ for distinct nonzero integers $a$ and $b$. Now
$b=b-0$ divides $f(b)-f(0)=a-b$ and so $b$ divides $a$. Moreover, $a=a-0$ divides $f(a)-f(0)=a-b$ and so $a$ divides $b$.
Therefore $|b|=|a|$, and so $b=-a$ as $a$ and $b$ are distinct.
\medskip

We make  several deductions from analyzing the four cases of the classification:
\begin{enumerate}
\item $0$ is strictly preperiodic if and only if $f^2(0)=f^4(0)\ne 0$;
\item $L(f)=\text{\rm lcm}[f(0),f^2(0)]=:\ell(f)$, as claimed in the proof of Theorem \ref{1st result}; 
\item $x_0$ has a wandering orbit  if and only if $x_2\ne x_4$: To see this, note that if $x_0$ is periodic, then $x_{n+2}=x_n$ for all $n\geq 0$ as the period length is either one or two.
Moreover, if $x_0$ is strictly preperiodic, then $0$ is strictly preperiodic for the map $x\to  f(x+x_0)-x_0$, with orbit $0\to b_1\to b_2\to \cdots$ where $b_n=x_n-x_0$.  In  all four cases of our classification $b_2=b_4$, and so $x_2=x_4$.
\end{enumerate}

\section{Private Prime Factors when $0$ is a periodic point.} \ \label{sec 7}
We now turn our attention to maps $x\to f(x)$ for which $0$ is not strictly preperiodic, beginning with when 
$0$ is  periodic with $f$ of degree $>1$. We have seen that the period must have length one or two.

  \subsection{When $0$ is periodic with period length 1.}\label{f period 1} 
We must have $f(x)=x^rg(x)$ for some  $g(x)\in \mathbb Z[x]$ with $g(0)\ne 0$, and $r\geq 1$.
We will assume that $g(x)$ has degree $\geq 1$,
else the integers in the orbit of $x_0$ contain only the prime factors of $x_0$ and of  $g(0)$, so do not help us to construct infinitely many primes. 

\begin{theorem} Suppose that $f(x)=x^rg(x)$ for some nonconstant $g(x)\in \mathbb Z[x]$ with $g(0)\ne 0$, and $r\geq 1$. For any given wandering orbit $(x_n)_{n\geq 0}$ with $x_0\in \mathbb Z$, define
\[
a_{n+1}:=\frac{g(x_n)}{\text{gcd}(g(x_n),g(0))}  \text{  for all  } n\geq 0.
\]
The $(a_n)_{n\geq 0}$ are an infinite sequence of pairwise coprime integers and, once $n$ is sufficiently large, each  $a_n$   has a private prime factor.
\end{theorem}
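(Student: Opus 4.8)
The plan is to run the Euclid--Fermat argument of Theorem \ref{1st result} one level down. The structural feature that makes this possible is that here $0$ is a \emph{fixed} point, so $f^k(0)=0$ for all $k$, and the recursion reads $x_{n+1}=x_n^{\,r}g(x_n)$. Thus the orbit is a divisibility chain $x_0\mid x_1\mid x_2\mid\cdots$: the $x_n$ themselves are nowhere near coprime, and all of the genuinely new arithmetic content at the $(n{+}1)$st step sits in the factor $g(x_n)$. So the objects to study are the $g(x_n)$, cleaned of their ``old'' prime powers by dividing out $\gcd(g(x_n),g(0))$ --- with $g(0)$ playing the role that $\ell(f)$ played in Theorem \ref{1st result}. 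First note that $g(x_n)\neq 0$ for every $n$, since $g(x_n)=0$ would force $x_{n+1}=0$ and then the orbit would be preperiodic, not wandering; hence each $a_{n+1}$ is a well-defined nonzero integer.

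For pairwise coprimality, fix $m<n$ and let $v_p$ denote $p$-adic valuation. From the recursion, $g(x_m)\mid x_{m+1}\mid x_{m+2}\mid\cdots\mid x_n$, so $g(x_m)\mid x_n$ and hence $v_p(x_n)\ge v_p(g(x_m))$ for every prime $p$. By Lemma \ref{congs} applied to the pair $x_n,0$, we have $x_n\mid g(x_n)-g(0)$. Now suppose a prime $p$ divides both $a_{m+1}$ and $a_{n+1}$. Since $p\mid a_{m+1}=g(x_m)/\gcd(g(x_m),g(0))$, we must have $v_p(g(x_m))>v_p(g(0))$; combining with the inequalities above gives $v_p(x_n)>v_p(g(0))$, so $v_p\big(g(x_n)-g(0)\big)\ge v_p(x_n)>v_p(g(0))$, and therefore $v_p(g(x_n))=v_p(g(0))$. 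But then $v_p(a_{n+1})=v_p(g(x_n))-\min\{v_p(g(x_n)),v_p(g(0))\}=0$, contradicting $p\mid a_{n+1}$. Hence no prime divides two of the $a_n$, so they are pairwise coprime.

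It remains to exhibit a private prime factor of $a_{n+1}$ for all large $n$; but once pairwise coprimality is known, \emph{every} prime factor of $a_{n+1}$ is automatically private, since a common factor with some $a_m$, $m\ne n{+}1$, would contradict $\gcd(a_{n+1},a_m)=1$. So it suffices to show $|a_{n+1}|>1$ eventually. Here I would use that $g$ is nonconstant, so $|g(t)|\to\infty$ as $|t|\to\infty$: choose $T$ so large that $|t|>T$ implies both $|g(t)|\ge 2$ and $|g(t)|>|g(0)|$. Since the orbit wanders, the $x_n$ are distinct integers, so $|x_n|$ is unbounded; and as soon as $|x_n|>T$ the recursion gives $|x_{n+1}|=|x_n|^{\,r}|g(x_n)|\ge 2|x_n|>|x_n|>T$, so from that index onward $|x_n|$ is strictly increasing and tends to infinity. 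Therefore, for all sufficiently large $n$, $|g(x_n)|>|g(0)|\ge\gcd(g(x_n),g(0))$, whence $|a_{n+1}|>1$ and $a_{n+1}$ has a (private) prime factor.

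The step needing care is the coprimality argument: divisibility alone is not enough. One does get $\gcd(g(x_m),g(x_n))\mid\gcd(g(x_m),g(0))$ for $m<n$ directly from $g(x_m)\mid x_n$ and Lemma \ref{congs} --- the analogue of ``$(x_m,x_n)\mid L(f)$'' in the proof of Theorem \ref{1st result} --- but a prime dividing $g(0)$ to a small power is not ruled out by this alone. The contradiction appears only after passing to $p$-adic valuations and noting that $p\mid a_{m+1}$ and $p\mid a_{n+1}$ each force the relevant valuation of $g(x_m)$, respectively $g(x_n)$, strictly above $v_p(g(0))$, which is incompatible with the valuation of $g(x_n)$ being pinned at exactly $v_p(g(0))$. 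The growth argument underlying the private prime factors is, by contrast, routine.
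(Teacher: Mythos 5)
Your proof is correct, and it follows the same overall skeleton as the paper's (orbit is a divisibility chain, $g(x_m)\mid x_n$ for $m<n$, the congruence $g(x_n)\equiv g(0)\pmod{x_n}$, then clean up by dividing out $\gcd(g(x_n),g(0))$, then finiteness of the indices with $|a_n|\le 1$). Two of the steps, though, are handled by different means, and the comparison is instructive.

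For pairwise coprimality, the paper shows $\gcd(g(x_m),g(x_n))\mid g(0)$ and then silently invokes the quotient argument it spelled out in the proof of Theorem~\ref{1st result}: since $\gcd(g(x_m),g(x_n))\mid\gcd(g(x_m),g(0))$, the cleaned quantity $a_{m+1}=g(x_m)/\gcd(g(x_m),g(0))$ divides $g(x_m)/\gcd(g(x_m),g(x_n))$, and likewise for $a_{n+1}$; these two quotients are coprime by construction of the gcd, so $\gcd(a_{m+1},a_{n+1})=1$. You instead pass to $p$-adic valuations and argue at each prime: $p\mid a_{m+1}$ forces $v_p(g(x_m))>v_p(g(0))$, hence $v_p(x_n)>v_p(g(0))$ via $g(x_m)\mid x_n$, hence $v_p\bigl(g(x_n)-g(0)\bigr)>v_p(g(0))$, so by the ultrametric inequality $v_p(g(x_n))=v_p(g(0))$ and $v_p(a_{n+1})=0$. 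Both arguments are valid; yours makes the ``why dividing by the gcd really works'' mechanism completely explicit, which is a reasonable thing to want given how compressed the paper's ``Therefore $(a_{m+1},a_{n+1})=1$'' is. Your closing remark that ``divisibility alone is not enough'' is a fair warning, but note that the missing ingredient need not be valuations: the coprime-quotient step from Theorem~\ref{1st result} is exactly what the paper has in mind, and it closes the gap at the gcd level without any valuation bookkeeping.

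For the private prime factors, the paper observes that if $|a_{n+1}|=1$ then $g(x_n)$ is a divisor of $g(0)$, so $x_n$ is a root of the fixed polynomial $\prod_{d\mid g(0)}(g(x)-d)$; since a wandering orbit takes distinct values, only finitely many $n$ can do this. Your route via growth, noting that $|x_n|\to\infty$ (once $|x_n|>T$ the recursion gives $|x_{n+1}|\ge 2|x_n|$) and hence $|g(x_n)|>|g(0)|\ge\gcd(g(x_n),g(0))$ eventually, is also correct, but slightly more work to set up; the polynomial-roots argument is the more economical one and requires no quantitative growth estimate. Either way, both establish the theorem.
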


\begin{proof}
Given a wandering orbit $(x_n)_{n\geq 0}$ we see that $x_m$ divides $x_{m+1}=x_m^r g(x_m)$ for all $m$; and so $x_m$ divides $x_n$ whenever $m\leq n$. Now $a_{n+1}$ is a divisor of $x_{n+1}$ for all $n\geq 0$. If $m<n$, then 
\begin{center}
$(g(x_m),g(x_n))$ divides $(x_{m+1},g(x_n))$, which divides $(x_n,g(x_n))$, \\
which divides $(x_n,g(0))$, which divides $g(0)$.
\end{center}
Therefore $(a_{m+1},a_{n+1})=1$, that is, the $(a_n)_{n\geq 1}$ are pairwise coprime, and so $a_n$ has a  private prime factor provided $|a_n|>1$. Now $a_n\ne 0$, else $x_{n+1}=0$, which would contradict the assumption that the orbit of $x_0$ is wandering. If $|a_n|=1$,  then $g(x_n)$ is a divisor $d$, of $g(0)$, that is, $x_n$ must be a root of the polynomial
 \[
 \prod_{\substack{ d \ \text{is a divisor of } g(0)}} (g(x)-d) .
 \]
But this has finitely many roots so, as the orbit of $x_0$ is wandering, there can only be finitely many   $n$ for which $|a_n|=1$. Therefore $a_n$ must have a private prime factor for all sufficiently large $n$.
 \end{proof}

 \subsection{When $0$ is periodic with period length 2.}\label{f period 2}
We rule out the polynomials $f(x)=a-x$ for any  constant $a$, else there are no wandering orbits. 

\begin{theorem} Suppose that $f(x)\in \mathbb Z[x]$ with   $f(x)+x$ nonconstant, such that 
$0$ is periodic under the map $x\to f(x)$, with period length 2. We write $f^2(x)=x^rG(x)$ with $G(0)\ne 0$ and $r\geq 1$.
For any given wandering orbit $(x_n)_{n\geq 0}$ with $x_0\in \mathbb Z$, define
\[
a_{n+2}:=\frac{G(x_n)}{\text{gcd}(G(x_n),G(0)f(0))}  \text{  for all  } n\geq 0.
\]
The $(a_n)_{n\geq 0}$ are an infinite sequence of pairwise coprime integers and, once $n$ is sufficiently large, each  $a_n$   has a private prime factor.
\end{theorem}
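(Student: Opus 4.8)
The plan is to transfer Theorem 2 (the case where $0$ has period length one) through the substitution $h := f^2$, and then supply one new argument controlling the interaction between the even- and odd-indexed terms of the orbit. Throughout write $b := f(0)$, which is nonzero; since $0$ has period $2$ we have $f(b) = 0$, so the orbit of $0$ is $0 \to b \to 0 \to b \to \cdots$, and in particular $f^2(0) = 0$, which is what makes the factorisation $f^2(x) = x^r G(x)$ meaningful. I would first record the reductions: every element of a wandering orbit is nonzero (otherwise the orbit is eventually periodic); $G$ is nonconstant (if $f^2$ were a monomial one is forced into $f(x) = a - x$, which is excluded); from $f^2(b) = f(0) = b$ one gets $b^r G(b) = b$, so $r = 1$ whenever $|b| > 1$; and writing $f(x) = (x - b)q(x)$ — valid since $f(b) = 0$ — forces $q(0) = -1$, whence a one-line computation of $G(0) = f'(b)f'(0) = q(b)\bigl(q(0) - b\,q'(0)\bigr) \equiv 1 \pmod b$, so that $\gcd(G(0), f(0)) = 1$.

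Next, split the orbit into $y_k := x_{2k}$ and $z_k := x_{2k+1}$; each is a wandering orbit of the map $h = f^2 = x^r G(x)$, with $y_{k+1} = y_k^r G(y_k)$ and similarly for $z$. Applying Theorem 2 to the map $h$ and the orbit $(y_k)$ shows that the integers $G(y_k)/\gcd(G(y_k), G(0))$ are pairwise coprime; since $\gcd(G(y_k), G(0))$ divides $\gcd(G(y_k), G(0)f(0))$, each $a_{2k+2}$ divides the corresponding one of these, and so the $a_{2k+2}$ are pairwise coprime; likewise the $a_{2j+3}$ are pairwise coprime. For the ``private prime factor'' claim it is enough to know that $|a_n| > 1$ for all large $n$ (coprimality then makes any prime factor of $a_n$ private): $a_n \neq 0$ because $x_n = x_{n-2}^r G(x_{n-2}) \neq 0$, and $|a_n| = 1$ forces $x_{n-2}$ to be a root of the fixed nonzero polynomial $\prod_{d \mid G(0)f(0)} (G(x) - d)$, which the distinct integers of a wandering orbit can satisfy only finitely often.

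The crux is the coprimality of an even-indexed $a_m$ with an odd-indexed $a_n$. Suppose a prime $p$ divides both. Since $a_m$ divides $G(x_{m-2})$, which divides $x_m = x_{m-2}^r G(x_{m-2})$, and likewise $a_n \mid x_n$, we have $p \mid \gcd(x_m, x_n)$; as $m - n$ is odd, Lemma~\ref{congs} applied along the orbit gives $\gcd(x_m, x_n) \mid f^{\,|m-n|}(0) = b$, so $p \mid b$. Hence $r = 1$ (as $|b| \geq 2$) and, by the reduction above, $v_p(G(0)) = 0$, where $v_p$ denotes the exponent of $p$. Now track valuations along the even subsequence: from $y_{k+1} = y_k G(y_k)$, together with $G(y_k) \equiv G(0) \pmod p$ whenever $p \mid y_k$, the sequence $v_p(y_k)$ is nondecreasing and is constant from the moment it first becomes positive; hence $v_p(G(y_k)) > 0$ for at most one index $k$, and the condition $p \mid a_m$, i.e. $v_p(G(x_{m-2})) > v_p(G(0)f(0)) = v_p(b)$, pins $m - 2 = 2k^*$ as that index, with $v_p(x_{2k}) = 0$ for all $k \le k^*$. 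The same analysis applies to the odd subsequence, giving $n - 2 = 2j^* + 1$ with the analogous profile. Feeding this back into the recurrence $x_{2k^*+2} = f(x_{2k^*+1}) = b + x_{2k^*+1}F(x_{2k^*+1})$, where $F(x) := (f(x) - b)/x \in \mathbb Z[x]$: the left side has $v_p$ equal to the jump value, which exceeds $v_p(b)$, so the two terms on the right must cancel at level exactly $v_p(b)$, forcing $v_p(x_{2k^*+1}) \le v_p(b)$; comparing this with the profile of the odd subsequence yields $k^* \le j^*$, and the symmetric computation starting from $x_{2j^*+3} = f(x_{2j^*+2})$ yields $j^* < k^*$ — a contradiction. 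Thus no prime divides both an even- and an odd-indexed $a$, and with the first observation this gives $\gcd(a_m, a_n) = 1$.

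Assembling the pieces, all the $a_n$ (for the indices $n \geq 2$ at which they are defined) are pairwise coprime, and $|a_n| > 1$ once $n$ is large, so each such $a_n$ has a private prime factor. I expect the delicate point to be the valuation bookkeeping in the third paragraph — in particular, verifying that the factor $f(0)$ in the denominator strips away exactly the common divisor that crossing between the two subsequences can create; the remainder is a routine transfer of Theorem 2 across the substitution $h = f^2$.
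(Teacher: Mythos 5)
Your proposal is correct, but the central step — coprimality of an even-indexed $a_m$ with an odd-indexed $a_n$ — is handled by a genuinely different and considerably heavier argument than the paper's. The paper disposes of this case in two lines: for $m<n$ with $n-m$ odd, $G(x_m)$ divides $x_{m+2}$, which (since $x_k\mid x_{k+2}$ and $n+1-(m+2)$ is even) divides $x_{n+1}$; while $G(x_n)$ divides $x_{n+2}=f(x_{n+1})$; hence $\bigl(G(x_m),G(x_n)\bigr)$ divides $\bigl(x_{n+1},f(x_{n+1})\bigr)=\bigl(x_{n+1},f(0)\bigr)$, which divides $f(0)$. Combined with the divisor $G(0)f(0)$ in the denominator of $a_{n+2}$, this forces $(a_{m+2},a_{n+2})=1$ exactly as in the even case (where the gcd bound is $G(0)$ instead). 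Your route instead fixes a prime $p$ dividing both $a_m$ and $a_n$, deduces $p\mid f(0)$ (hence $r=1$ and $p\nmid G(0)$), and then tracks $p$-adic valuations along the two subsequences $y_k=x_{2k}$, $z_k=x_{2k+1}$, showing the ``jump'' indices $k^*$ and $j^*$ satisfy both $k^*\le j^*$ and $j^*<k^*$. I checked the valuation bookkeeping and it goes through, and it does have a pedagogical benefit: it pinpoints exactly why the factor $f(0)$ is needed, and it yields the pleasant side fact $\gcd(G(0),f(0))=1$ (which the paper never remarks on and does not need). But it is much longer and more delicate than the paper's divisibility chain, and the paper's approach generalizes the period-$1$ argument more transparently. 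Two small further observations: your reduction of the even-even case to the period-$1$ theorem via $h=f^2$ is a clean modularization that the paper leaves implicit (``by the argument of the previous subsection''); and you asserted without proof that $G$ nonconstant follows from excluding $f(x)=a-x$, whereas the paper gives a short argument for this — worth spelling out if you write this up in full.
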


\begin{proof} We begin by showing that $G(x)$ cannot be a constant, $c$. If so, then $f^2(x)=cx^r$ but $f(0)\ne 0$. Therefore, $f(x)$ has just one root, as $f^2(x)$ has at least as many distinct roots as $f$. Writing
$f(x)=c(x-a)^d$ with $a\ne 0$, the roots of $f^2(x)$ are the roots of $c(x-a)^d-a$, each repeated $d$ times. This implies that $d=1$ as the roots of $c(x-a)^d-a$ are distinct, and so
$f^2(c)=c^2x-ac(c+1)$ and so $c=-1$; that is, $f(x)=a-x$, which we have already ruled out.

Now, by definition, $a_{n+2}$ is a divisor of $x_{n+2}$ for all $n\geq 0$. Suppose that $n>m\geq 0$. If $n-m$ is even, then $(G(x_m),G(x_n))$ divides $G(0)$, by the argument of the previous subsection, and so $(a_{m+2},a_{n+2})=1$.
If  $n-m$ is odd, then    $G(x_m)$,  divides $x_{m+2}$ and so divides $x_{n+1}$.
On the other hand,    $G(x_n)$ divides $x_{n+2}=f(x_{n+1})$, and so $(G(x_m),G(x_m))$ divides
$(x_{n+1}, f(x_{n+1}))=(x_{n+1}, f(0))$ which divides $f(0)$, and so $(a_{m+2},a_{n+2})=1$.

Therefore the $(a_n)_{n\geq 2}$ are pairwise coprime, and so $a_n$ has a  private prime factor provided $|a_n|>1$. We then show, just as in the previous subsection, that $a_n\ne 0$, and  there are only finitely many $n$ for which $|a_n|= 1$.
\end{proof}

We can combine the first three theorems to obtain the following result:

\begin{corollary} Suppose that $f(x)\in \mathbb Z[x]$  is not  of the form $cx^d$ or $a-x$, for any constants $a$ or $c$, and  that $0$ is preperiodic for the map $x\to f(x)$. 
Then, for any given wandering orbit $(x_n)_{n\geq 0}$ with $x_0\in \mathbb Z$, each $x_n$ has a private prime factor for all sufficiently large $n$.
\end{corollary}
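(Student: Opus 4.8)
The plan is to reduce the Corollary to the three theorems already proved by carefully tracking how the hypothesis ``$f$ is not of the form $cx^d$ or $a-x$'' rules out exactly the degenerate cases excluded in each theorem. First I would dispose of the linear case: if $\deg f \le 1$, then orbits of linear maps are given explicitly (arithmetic or geometric-plus-constant progressions, as in the footnote), so a wandering orbit forces $f(t)=at+b$ with either $a=1,\ b\ne 0$ or $|a|\ge 2$; in the first case $x_n=x_0+nb$ and in the second $x_n$ grows without bound, and in both situations one checks directly (again via Lemma~\ref{congs}, since $x_m - x_n \mid f(x_m)-f(x_n)$ relates successive gcd's) that the $x_n$ already have private prime factors for large $n$ — indeed for $f(t)=t+b$ the numbers $x_n$ themselves need a coprimality argument, but any ``$cx^d$ or $a-x$'' exclusion is vacuous here since those are the only linear maps to avoid and we have avoided $a-x$. (I would in fact present the linear case as the easy warm-up and then assume $\deg f \ge 2$.)

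Next, with $\deg f \ge 2$, I would invoke the hypothesis that $0$ is preperiodic and split into the two alternatives: $0$ is strictly preperiodic, or $0$ is periodic. If $0$ is strictly preperiodic, Theorem~\ref{1st result} applies verbatim: the sequence $a_n = x_n/\gcd(x_n,\ell(f))$ is pairwise coprime, and for $n\ge 3$ has a private prime factor, hence so does $x_n$ (a private prime factor of $a_n$ divides $x_n$ and, since it divides no $a_m$ with $m\ne n$ and $a_m \mid x_m$... wait — more carefully: a prime dividing $a_n$ and some $x_m$, $m \ne n$; I need that it then divides $a_m$). This last implication is the one delicate point and I address it below. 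If $0$ is periodic, its exact period is $1$ or $2$ by Lemma~\ref{PeriodLength}. Period $1$ means $f(x)=x^r g(x)$ with $g(0)\ne 0$; the excluded case $f=cx^d$ is precisely $g$ constant, which Theorem~2 forbids, so Theorem~2 applies and gives private prime factors of the $a_{n+1}=g(x_n)/\gcd(g(x_n),g(0))$ for large $n$, and these divide $g(x_n)\mid x_{n+1}$. Period $2$ means $f(x)+x$ is nonconstant unless $f(x)=a-x$, which is excluded, so Theorem~3 applies and gives private prime factors of $a_{n+2}=G(x_n)/\gcd(G(x_n),G(0)f(0))$ for large $n$, dividing $G(x_n)\mid f^2(x_n)=x_{n+2}$.

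The main obstacle, and the step deserving genuine care, is the passage from ``$a_n$ has a private prime factor among the $(a_k)$'' to ``$x_n$ has a private prime factor among the $(x_k)$'' (and, for Theorems~2--3, the reindexing: ``$a_{n+1}$'' or ``$a_{n+2}$'' a divisor of $x_{n+1}$ or $x_{n+2}$). Suppose $p\mid a_n$ but $p\nmid a_m$ for all $m\ne n$; I must rule out $p\mid x_m$ for some $m\ne n$. Here one uses the structural fact that in each theorem $a_m$ is $x_m$ (or $g(x_{m-1})$, or $G(x_{m-2})$) divided only by a \emph{fixed} finite set of primes — those dividing $\ell(f)$, resp.\ $g(0)$, resp.\ $G(0)f(0)$. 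So if $p\mid x_m$ with $p\nmid a_m$, then $p$ lies in that fixed finite set $S$ of ``universal'' primes. Thus the only way $x_n$ could fail to have a private prime factor is if \emph{every} prime factor of $a_n$ lies in $S$ or divides some $x_m$ — but the primes of $a_n$ outside $S$ are private to $a_n$ by construction, hence... I need: a prime $p \mid a_n$, $p\notin S$, dividing some $x_m$, $m\ne n$, would then (being outside $S$) also divide $a_m$, contradicting privateness. Therefore every prime factor of $a_n$ outside $S$ is a private prime factor of $x_n$, and since $|a_n|>1$ for large $n$ and $a_n$'s prime factors all lie outside $S$ (that is exactly the content of dividing out by the gcd), $a_n$ — hence $x_n$ — has a private prime factor for all sufficiently large $n$. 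Writing this cleanly, uniformly across the three cases, with the correct bookkeeping of indices, is the one place the argument must be executed with precision rather than waved through.
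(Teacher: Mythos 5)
The paper gives no explicit proof of this Corollary beyond the remark that it follows by ``combining the first three theorems,'' and your case split (linear; strictly preperiodic; periodic of period $1$; periodic of period $2$) is exactly the decomposition that remark suggests. Two smaller points first. Your linear-case discussion is actually vacuous and can be dropped: if $\deg f\le 1$ and $f$ is neither of the form $cx^d$ nor $a-x$, then either $f(t)=t+b$ with $b\ne 0$ or $f(t)=at+b$ with $|a|\ge 2$ and $b\ne 0$, and in both cases one checks directly (e.g.\ $f^n(0)=b(a^n-1)/(a-1)$) that $0$ has a wandering orbit, so the hypothesis ``$0$ is preperiodic'' already excludes these maps. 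And in the strictly preperiodic case your parenthetical claim that ``$a_n$'s prime factors all lie outside $S$'' (for $S$ the primes dividing $\ell(f)$) is false: dividing $x_n$ by $\gcd(x_n,\ell(f))$ lowers the $p$-adic valuation but does not kill $p$ when $v_p(x_n)>v_p(\ell(f))$. That gap is, however, easily closed: since the $a_n$ are pairwise coprime, each prime of the finite set $S$ divides at most one $a_n$, so for all but finitely many $n$ we have $\gcd(a_n,\ell(f))=1$, and then any prime factor of $a_n$ (which exists once $|a_n|>1$, i.e.\ for $n\ge 3$) lies outside $S$ and, since $(x_m,x_n)$ divides $\ell(f)$ for $m\ne n$, is genuinely private to $x_n$.

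The serious gap is in the periodic cases, and it is the very ``delicate step'' you rightly flag. Your structural claim that ``in each theorem $a_m$ is $x_m$ (or $g(x_{m-1})$, or $G(x_{m-2})$) divided only by a fixed finite set of primes'' is correct only for Theorem 1. In the period-$1$ case we have $x_{m}=x_{m-1}^r\, g(x_{m-1})$ while $a_m$ divides $g(x_{m-1})$, so $x_m/a_m$ is a multiple of $x_{m-1}^r$, whose prime divisors are in no way confined to the fixed set $S$; the implication ``$p\mid x_m$ and $p\nmid a_m$ $\Rightarrow$ $p\in S$'' therefore fails. Worse, in this case $x_m\mid x_{m+1}$ for every $m$, so \emph{every} prime factor of $x_m$ also divides $x_{m+1}$, and no $x_m$ can have a private prime factor in the sense defined in Section~\ref{sec 1}; the same holds along even and odd indices in the period-$2$ case since $x_n\mid x_{n+2}$. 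Thus the Corollary, read literally, is false when $0$ is periodic (e.g.\ $f(x)=x(x+2)$, $x_0=1$), and no bookkeeping can rescue your argument there. What Theorems 2 and 3 actually deliver is a sequence of pairwise distinct primes $p_n$ with $p_n\mid x_n$ for all large $n$ --- in the divisibility-chain situation these are \emph{primitive} prime factors, not private ones. Your write-up should either restate the conclusion in that weaker (and correct) form for the periodic cases, or explicitly flag the discrepancy with the Corollary's phrasing, rather than attempt to prove the literal statement.
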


We now turn our attention to the more difficult case of those $f$ for which $0$ itself has a wandering orbit.

\section{Polynomial maps where $0$ has a wandering orbit}  \label{sec 8}
Henceforth we will suppose that $0$ has a wandering orbit under the map $x\to f(x)$, for some polynomial $f(x)\in \mathbb Z[x]$. We will investigate whether the terms of wandering integer orbits all have private prime factors from some point onwards. Where better to begin than with the orbit that begins with $0$?

\subsection{Orbits that begin with $0$ are divisibility sequences.} If $n=m+k$, then
$x_n=f^k(x_m)\equiv f^k(0)=x_k \pmod {x_m}$, and so $(x_n,x_m)=(x_k,x_m)$. Subtracting $m$ several times from $n$,  we deduce that if $n=mq+r$ then $(x_n,x_m)=(x_r,x_m)$, which mirrors that $(n,m)=(r,m)$, the key step 
 in the Euclidean algorithm. Therefore, by applying the same steps as in the Euclidean algorithm we deduce that 
\[
(x_m,x_n) = x_{(m,n)} .
\]
Therefore $(x_n)_{n\geq 1}$ is a \emph{strong divisibility sequence}; for example, $x_n=2^n-1$, the orbit of $0$ under the map  $x\to f(x)=2x+1$. 
Other examples of strong divisibility sequences include the Fibonacci numbers and any second order linear recurrence sequence starting with $0$, and the denominators of multiples of a given point on an elliptic curve.

A divisibility sequence cannot have  private prime factors, because if prime $p$ divides $x_m$ then it divides $x_n$ for every $n$ divisible by $m$. Instead one looks for \emph{primitive prime factors}, that is, primes $p_n$ that divide $x_n$ but do not divide $x_m$ for any $m,\ 1\leq m<n$. It is known that for any second order linear recurrence sequence beginning with a $0$, and for any elliptic divisibility sequence, $x_n$ has a primitive prime factor for all sufficiently large $n$. The two key ideas in the proof are to show that the $x_n$ are fast-growing and that  if  $p$ is prime factor of $x_n$, but not a primitive prime factor, then $p$ but not $p^2$ divides $x_n/x_{n/p}$. In particular, one deduces that $x_n$ is too large for all its prime power factors to be powers of imprimitive primes.

In the example $x_n=2^n-3$ (the orbit of $-2$ under the map $x\to 2x+3$), and for general wandering orbits, one cannot be so precise about the power to which imprimitive primes $p$ divide $x_n$, and so it is an open question as to whether the $x_n$ all have  primitive prime factors for sufficiently large $n$. Nonetheless, we expect that this is true and, in the next subsection, we will consider this question following roughly the same outline.

\subsection{The growth of the $x_n$ and $(x_m,x_n)$.} The $x_n$ grow very  fast, and we can gain a quite precise understanding:\ If $f(x)\in \mathbb Z[x]$ has degree $d>1$ and  if $x_0$ is an integer whose orbit is wandering, then there exist real numbers $1\geq \alpha>0$ and $\beta$, which depend only on $f$, for which
 \begin{equation} \label{aAsymp}
     |x_n|\  \text{is the integer nearest to} \  \alpha \tau^{d^n} +\beta,  \text{ when} \ n\  \text{is sufficiently large},
 \end{equation}
 where $\tau>1$ is a constant that depends on both $f$ and $x_0$.\footnote{More generally, the \emph{height}, $h(r)$, of a rational number $r=p/q$ is given by $\max\{ |p|,|q|\}$; this equals $|r|$ if $r$ is a nonzero integer. For any orbit  $\{ x_n\}_{n\geq 0}$ of a rational 
function  $\phi(t)=f(t)/g(t)$, with $f(t),g(t) \in \mathbb Z[t]$  of degree $d$, there exists a constant $\tau>0$ (the exponential of the \textsl{dynamical canonical height}, as in \cite{CS} or Section 3.4 of \cite{Si}), and constants $c_1,c_2>0$ such that $c_1\tau^{d^n} < h(x_n) < c_2\tau^{d^n}$.}
  (We will sketch a proof of \eqref{aAsymp} in Section \ref{sec 9}.)
In the most interesting case, the orbit of $0$, we write $\tau=\tau_0$. 

Since $\alpha\leq 1$,  \eqref{aAsymp} implies that $|x_r|\leq 2\tau^{d^r}$ 
and $|f^r(0)|\leq 2\tau_0^{d^r}$ if $r$ is sufficiently large. 
Suppose that $0\leq m\leq n-1$. Since $(x_m,x_n)$ divides $f^{n-m}(0)$ as well as $|x_m|$, we deduce that   
\begin{equation} \label{bAsymp}
(x_m,x_n)\leq \min\{ |x_m|, f^{n-m}(0) \} \leq   2 \tau_*^{d^{n/2}}
 \end{equation}
if $n$ is sufficiently large, where $\tau_*=\max\{ \tau, \tau_0\}$. 
 
We can therefore deduce that the $x_n$ grow so fast that the product of all of the gcd$(x_m,x_n)$ with $m<n$ is far smaller than $x_n$, so it seems very likely that $x_n$ has primitive prime factors. However, this argument cannot be made into a proof since some of the primes that divide the $x_m$ with $m<n$ might divide $x_n$ to an extraordinarily high power.

  \subsection{Ruling out too many large prime power divisors.}\label{abc}  
  The $abc$ conjecture can be used  to bound solutions to equations in which all of the variables are divisible by surprisingly high powers of primes. The idea is to study pairwise coprime integer solutions to the equation
  \[ a+b=c. \]
 The \emph{$abc$-conjecture} states, in a quantitative form,  that $a,b$ and $c$ cannot all be divisible only by large powers of primes.\footnote{Mochizuki has recently  claimed to have  proved the $abc$-conjecture, but the experts have as yet struggled to agree, definitively, that Mochizuki's extraordinary ideas are all correct.}  
 For example if we have a putative solution to Fermat's last theorem, like $x^n+y^n=z^n$ we can take $a=x^n, b=y^n$ and $c=z^n$, and the $abc$-conjecture should rule out any possible solution, at least once the numbers involved are sufficiently large. One does so by bounding the size of $a,b$, and $c$ in terms of the product of the distinct primes that divide $a,b$, and $c$.  To put this  in quantitative form requires some $\epsilon$'s and that sort of thing.
 
 Fix $\epsilon>0$ (which should be thought of as being small). There exists a constant $\kappa_\epsilon>0$ (which depends on $\epsilon$, and chosen so it works for all examples), such that if $a$ and $b$ are coprime positive integers with $c=a+b$, then
 \[
   c \leq \kappa_\epsilon \left(  \prod_{\substack{p \ \text{prime} \\ p \ \text{divides} \ abc}} p \right)^{1+\epsilon} .
 \]
 This is the \emph{$abc$-conjecture}, one of the great questions of modern mathematics.
In the special case that $b=1$ we have $c=a+1$, and so
 \[
   a \leq \kappa_\epsilon \left(  \prod_{\substack{p \ \text{prime} \\ p \ \text{divides} \ a(a+1)}} p \right)^{1+\epsilon} .
 \]
 The product here is over all primes $p$ that divide the polynomial $f(x)=x^2+x$ evaluated at $x=a$. Langevin \cite{Lan} proved that a far-reaching generalization, which applies to the prime divisors of any given polynomial, is actually  a consequence of the $abc$-conjecture.  
\medskip

\noindent \textbf{Consequence of the $abc$-conjecture}: Suppose that $f(x)\in \mathbb Z[x]$ has at least $d$ distinct roots, for some $d\geq 2$. For any fixed $\epsilon>0$, there exists a constant $\kappa_{\epsilon, f}>0$, which depends on both $\epsilon$ and $f$, such that for any integer $a$ we have
\[
   |a|  \leq \kappa_{\epsilon, f}  \left(  \prod_{\substack{p \ \text{prime} \\ p \ \text{divides} \ f(a)}} p \right)^{1/(d-1)+\epsilon} .
 \]           
\medskip

\noindent We will apply this to our polynomial $f$ with $\epsilon=1$, writing $\kappa$ for $\kappa_{1, f}$ for $x_n=f(x_{n-1})$. Therefore, if $f$ has at least two distinct roots, then the $abc$-conjecture implies that 
\begin{equation} \label{abc-consequence}
 |x_{n-1}| \leq \kappa   \left(  \prod_{\substack{p \ \text{prime} \\ p \ \text{divides} \ x_n}} p \right)^{2} .
\end{equation}

\subsection{The product of the primes if $x_n$ has no new prime factors.}  If every prime factor of $x_n$ already divides a previous term in the sequence, then we have that 
\[
 \prod_{\substack{p \ \text{prime} \\ p \ \text{divides} \ x_n}} p \ \ \text{divides} \ \ 
 \prod_{m=0}^{n-1}   \prod_{\substack{p \ \text{prime} \\ p \ \text{divides} \ (x_m,x_n)}} p,\ \ \text{which divides} \ \ 
 \prod_{m=0}^{n-1}  (x_m,x_n).
 \]
Taking the product of \eqref{bAsymp} over all $m,\ 0\leq m\leq n-1$, we deduce that 
\[
 \prod_{\substack{p \ \text{prime} \\ p \ \text{divides} \ x_n}} p \leq \prod_{m=0}^{n-1}  (x_m,x_n) \leq 
 2^n \tau_*^{nd^{n/2}}
 \]
once $n$ is sufficiently large. Substituting this into \eqref{abc-consequence}, gives\footnote{An analogous argument is used to bound the imprimitive prime factors of recurrence sequences, and in a related context in \cite{IS}.}
\[
 |x_{n-1}| \leq \kappa   2^{2n} \tau_*^{2nd^{n/2}} .
\]
Finally, comparing this to \eqref{aAsymp}, taking logarithms, and dividing through by $\log \tau$, we see that there exists a constant $\gamma>0$ (which can be given explicitly in terms of $\tau$ and $\tau_*$) such that, for large enough $n$,
\[
d^{n} \leq \gamma  n d^{n/2}   .
\]
This is false once $n$ is sufficiently large, since $d^n$ grows much faster than $nd^{n/2}$  as $n$ increases. Therefore we have proved the following conditional result:

\begin{theorem} Assume the $abc$-conjecture. Suppose that $f(x)\in \mathbb Z[x]$ has at least two distinct roots, and that $0$ and $x_0$ both have wandering orbits under the map  $x\to f(x)$. Then
$x_n$ has a primitive prime factor for all sufficiently large $n$.
\end{theorem}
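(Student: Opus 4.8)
The plan is to follow the outline that the paper has already set up in Sections 8.2--8.4, gluing together the three analytic ingredients: (i) the growth estimate \eqref{aAsymp} for $|x_n|$, (ii) the gcd bound \eqref{bAsymp}, and (iii) the Langevin consequence of the $abc$-conjecture, \eqref{abc-consequence}. The argument is a proof by contradiction: suppose $x_n$ has \emph{no} primitive prime factor, i.e.\ every prime dividing $x_n$ already divides some earlier term $x_m$ with $0\le m\le n-1$. Under that assumption I would first bound $\prod_{p\mid x_n}p$ from above. Since each such prime $p$ divides $\gcd(x_m,x_n)$ for some $m<n$, the radical $\prod_{p\mid x_n}p$ divides $\prod_{m=0}^{n-1}\gcd(x_m,x_n)$, and applying \eqref{bAsymp} term-by-term gives $\prod_{p\mid x_n}p\le 2^n\tau_*^{nd^{n/2}}$ for $n$ large, where $\tau_*=\max\{\tau,\tau_0\}$ and $\tau_0$ governs the growth of $f^n(0)$.

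Next I would feed this into the $abc$-consequence. Since $f$ has at least two distinct roots, \eqref{abc-consequence} gives $|x_{n-1}|\le \kappa\bigl(\prod_{p\mid x_n}p\bigr)^2 \le \kappa\, 2^{2n}\tau_*^{2nd^{n/2}}$. On the other hand, the growth estimate \eqref{aAsymp} applied to the orbit of $x_0$ says $|x_{n-1}|$ is within $1$ of $\alpha\tau^{d^{n-1}}+\beta$ with $\tau>1$, so in particular $|x_{n-1}|\gg \tau^{d^{n-1}}$ for large $n$. Taking logarithms of both inequalities and dividing by $\log\tau$, one gets $d^{n-1}\le \gamma\, n d^{n/2}$ for some explicit constant $\gamma>0$ depending only on $\tau,\tau_*$ (and absorbing $\log\kappa$, $\log 2$, $|\beta|$, $\log\alpha$ into the ``large $n$'' hypothesis). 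But $d\ge 2$, so $d^{n-1}/d^{n/2}=d^{n/2-1}\to\infty$ faster than any polynomial in $n$; hence $d^{n-1}\le\gamma n d^{n/2}$ fails for all sufficiently large $n$. This contradiction shows $x_n$ must have a primitive prime factor once $n$ exceeds some threshold, which is exactly the claim.

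I would take care with two bookkeeping points. First, the estimate \eqref{aAsymp} is only asserted for $n$ sufficiently large and only when $\deg f=d>1$; the hypothesis ``$f$ has at least two distinct roots'' forces $d\ge 2$, so this is fine, but I should note explicitly that $0$ having a wandering orbit is what licenses using $\tau_0$ (equivalently, that $f^r(0)$ is genuinely growing like $\tau_0^{d^r}$ rather than being eventually periodic), and that $x_0$ having a wandering orbit is what licenses \eqref{aAsymp} for the orbit we care about. Second, I should make sure the ``no primitive prime factor'' hypothesis is applied at the right index: I am bounding the radical of $x_n$ but then invoking \eqref{abc-consequence} with $a=x_{n-1}$, $f(a)=x_n$; so the chain of inequalities compares $|x_{n-1}|$ (growing like $\tau^{d^{n-1}}$) against a quantity of size $\tau_*^{O(nd^{n/2})}$, and since $d^{n-1}\ge d^{n/2}\cdot d^{n/2-1}$ the left side still wins. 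A slightly cleaner alternative is to run the comparison at $x_n$ directly, using that $\prod_{p\mid x_n}p\ge$ (some prime power factor)\dots but the paper's route via $x_{n-1}$ is already clean enough.

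The main obstacle is not in this deduction at all, which is essentially formal once the three ingredients are granted; it lies in the ingredients themselves. The serious analytic content is the growth asymptotic \eqref{aAsymp} (which the paper promises to sketch in Section 9 via the theory of the dynamical canonical height) and, of course, the $abc$-conjecture together with Langevin's deduction of the polynomial radical bound from it --- the latter being exactly why the theorem is only \emph{conditional}. Within the self-contained part, the one genuinely delicate step is \eqref{bAsymp}: one needs both that $(x_m,x_n)$ divides $f^{n-m}(0)$ \emph{and} that it divides $|x_m|$, and then that \emph{whichever} of $m$ or $n-m$ is at most $n/2$ yields a bound of the form $2\tau_*^{d^{n/2}}$ --- so the $\max$ in $\tau_*$ and the exponent $d^{n/2}$ (rather than $d^{n-m}$ or $d^m$ separately) are doing real work, and I would state that case split carefully rather than waving at it.
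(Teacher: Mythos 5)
Your proof is correct and follows essentially the same route as the paper, assembling the growth estimate \eqref{aAsymp}, the gcd bound \eqref{bAsymp}, and Langevin's $abc$-consequence into the same contradiction $d^{n-1}\le\gamma\,n\,d^{n/2}$. Your bookkeeping remarks (writing $d^{n-1}$ rather than the paper's $d^{n}$, and flagging the $\min\{m,\,n-m\}\le n/2$ case split hidden in \eqref{bAsymp}) are both valid and slightly sharpen the exposition, but they do not amount to a different argument.
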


Note that this includes the possibility that $x_0=0$; and so $f^n(0)$ has a primitive prime factor for all sufficiently large $n$, assuming the $abc$-conjecture.


 

\subsection{Primitive prime factors of rational functions.} 
Now suppose that $\phi(t)=f(t)/g(t)$, with $f(t),g(t) \in \mathbb Z[t]$.
Beginning with a rational number $r_0$, we define $r_{n+1}=\phi(r_{n})$ for all $n\geq 0$, and write $r_n=a_n/b_n$ where
gcd$(a_n,b_n)=1$ and $b_n\geq 1$.

Suppose that $f(0)=0$ but $f(t)$ is not of the form $at^d$.  Ingram and Silverman \cite{IS} showed that if the orbit of $r_0$ is not eventually periodic, then $a_n$ has a primitive prime factor for all sufficiently large $n$.  It is believed that some result of this sort should hold whether or not $f(0)=0$, and indeed this has been confirmed by Gratton,   Nguyen, and Tucker (\cite{GNT}) under the assumption of the  $abc$-conjecture.

 \section{Asymptotic growth of wandering orbits.}  \label{sec 9}

Fix an integer $d>1$.  Let $(y_n)_{n\geq 0}$ be any sequence of positive integers for which 
\[ y_n^d<y_{n+1}<(y_n+1)^d \text{  for all } n\geq 0.\] 
If $\ell_n=y_n^{1/d^n}$ and $u_n=(y_n+1)^{1/d^n}$,
then $\ell_n<\ell_{n+1}<\cdots < u_{n+1}\leq u_n$. The $\ell_n$ are therefore an increasing bounded sequence, so must   tend to a limit, call it $\tau$. Then $\ell_n<\tau\leq u_n$ for all $n$ and so $y_n<\tau^{d^n}\leq y_n+1$, with equality in the upper bound  if and only if $y_n=\tau^{d^n}-1$ for all $n$. Therefore
 $y_n$ is the largest integer $< \tau^{d^n}$. In fact   $y_n=\lfloor \tau^{d^n}\rfloor$,  other than in the special case in which $\tau$ is an integer and  $y_n=\tau^{d^n}-1$ for all $n$.   
 
 Mills \cite{Mil} observed, in 1947, that advances in our understanding of the distribution of primes imply  that if $N$ is sufficiently large, then there is a prime between $N^3$ and $(N+1)^3$. So taking $p_0=y_0$ to be any prime greater than $N$, and then selecting  $p_{n+1}$ to be any prime in $(p_n^3,(p_n+1)^3)$ for each $n\geq 0$,
 one obtains a \emph{Mills' constant}, $\tau$, such that
 \begin{center}
$ \lfloor  \tau^{3^n}\rfloor $ is the prime $p_n$ for every integer $n\geq 0$.
 \end{center}
This proof yields   a lot of flexibility in constructing $\tau$ with this property (as we expect many primes in any interval of the form $(N^3,(N+1)^3)$), so it seems unlikely that these numbers $\tau$ are anything special beyond the beautiful property identified by Mills.

\begin{proof}[Sketch of a modification to prove \eqref{aAsymp}] For any polynomial $f(x)=ax^d+bx^{d-1}+\cdots$ with $a>0$, the polynomial 
\[ \alpha^{-1} (f(x)-\beta)- ( \alpha^{-1} (x-\beta))^d , \]
where $\alpha=a^{-1/(d-1)}$ and $\beta=-b/(da)$,  has degree $\leq d-2$. Given an integer orbit $(x_n)_{n\geq 0}$, let $y_n=\alpha^{-1} (x_n-\beta)$, so that 
$y_{n+1}-y_n^d$ can be written as a polynomial in $y_n$ of degree $\leq d-2$ (where the coefficients of the polynomial are independent of $n$). This implies that, for any $\epsilon>0$, if $n$ is sufficiently large, then
\[  (y_n-\epsilon)^d +\epsilon< y_{n+1} <  (y_n+\epsilon)^d -\epsilon .\]
One can then modify the proof above to show that $(y_n-\epsilon)^{1/d^n}$ tends to a limit $\tau$ and that 
$ y_n -\epsilon < \tau^{d^n}  < y_n +\epsilon$.   This implies that
\[ x_n=\alpha y_n+\beta\in \left(\alpha \tau^{d^n}+\beta -\frac 12,  \alpha \tau^{d^n}+\beta +\frac 12\right) \]
 taking $\epsilon=1/2\alpha$, and so $x_n$ is the nearest integer to $ \alpha \tau^{d^n}+\beta $ as claimed.
 \end{proof}

There is a power series $P(T)=  c_1 T + c_0 + \frac{c_{-1}}{T} + \frac{c_{-2}}{T^2} +\cdots$
 which satisfies the \textsl{functional equation}
 \[   P( x^d)  = f(P(x)) . \]
 To determine the $c_i$, we compare the coefficients on both sides of the equation: Comparing the coefficients of $T^d$ implies that $c_1=\alpha$;  comparing the  coefficients of $T^{d-1}$ implies that $c_0=\beta$. For each $j\geq 0$, the coefficients of $T^{d-1-j}$ involve polynomials in the  $c_i$ with $i>-j$ as well as a linear term in $c_{-j}$, which allows us to determine $c_{-j}$.

Assuming that $P(T)$ absolutely converges  once $|T|$ is sufficiently large,    if $x_m$ is large enough there is a unique positive real  $t$ for which $P(t)=x_m$. Then
 \[
 P(t^d)=f(P(t)) = f(x_m) = x_{m+1},
 \]
 and $P(t^{d^2})=f(P(t^2)) = f(x_{m+1}) = x_{m+2}$, etc.,
so that $P(t^{d^k})= x_{m+k}$ for all $k\geq 0$. Taking $\tau=t^{1/d^m}$ we deduce that 
\[
x_n = P(\tau^{d^n})
\]
for all sufficiently large $n$. 

One can sidestep the convergence issue by working with finite length truncations of $P(T)$. Thus if
$P_k(t)=\sum_{j=1-k}^1 c_jt_j$, one can prove there exists a constant $C_k>0$ such that 
\[ |x_n-P_k(\tau^{d^n})|\leq C_k / \tau^{kd^n}\] for all  $n\geq 0.$

  \subsection{The mysteries of the numbers $\tau$?}
 Although this last subsection allows one to show the existence of the constant   $\tau$, it gives no hint as to what kind of number $\tau$ is. The $\tau$ corresponding to the Fermat numbers is $2$. But what kind of number is $\tau$ for Euclid's sequence $(E_n)_{n\geq 0}$?  Is it rational? Irrational? Related to $\pi$? I have no idea.  Also, in general, is there anything special about the numbers $\tau_0$, the $\tau$-values that arise out of the wandering orbits of $0$?
 For a given $f(x)$, what does the set of $\tau$-values look like as we vary over all wandering orbits? Is each value isolated, or does the set have limit points?  If we also vary over all $f$ of given degree $d$, we have a countable subset of $\mathbb R_{>1}$. Are the $\tau$ dense in $\mathbb R_{>1}$? Or are there holes in the spectrum? If so, why? 
 Some of these questions are addressed in  section 3.4 of \cite{Si}, and there are some particularly interesting results and conjectures  there on lower bounds for  values of $\tau$ that are $>1$ in terms of the given map and the field of definition.
Gaining a better understanding of the numbers $\tau$ seems like a subject that is ripe for further and broader study.

\section{Appendix: Private prime factors for all $n\geq 3$.}  \label{app A}
One can improve Theorem  \ref{1st result} by noting that if $|a_n|=1$ (that is, $x_n$ is a divisor of $L(f)$)  then $n=0, 1$, or $2$. Our proof is given by  a tedious   case-by-case analysis. For example, when $0\to a \to a$ with $a>0$ we proceed as follows.

Suppose that $x_{n+2}=d$ is a divisor of $a$, for some  $n\geq 0$,  so that $x_n\to x_{n+1}\to d$ where $x_n,x_{n+1},x_{n+2}=d,a$ are distinct. Now $x(x-a)$ divides $f(x)-a$ and so $x_{n+1}(a-x_{n+1})$ divides $a-x_{n+2}=a-d$, and $x_n(a-x_n)$ divides $x_{n+1}-a$. Moreover, as $d$ divides $a$  and $d\ne a$ we have $1\leq a-d\leq 2a$.

Let $T=\min\{ x_{n+1}, a-x_{n+1}\}$ so that $T(a-T)=x_{n+1}(a-x_{n+1})$ divides $a-d$. Now $T\geq -1$, else $2(a+2)\leq |T(a-T)|\leq 2a$. If $T=-1$, then $a+1$ divides $a-d$, so that $d=-1$, and therefore $x_{n+1}=a+1$ as $x_{n+1}=T$ or $a-T$ and $x_{n+1}\ne d$. Finally, 
$x_n(a-x_n)$ divides $x_{n+1}-a=1$ and so $a=2$ and $x_n=1$. That is, $a=2$ and we have $1\to 3\to -1$.

We may now assume that $T\geq 1$, so that $a-1\leq |x_n(a-x_n)| = a-x_{n+1}\leq a-T\leq a-1$. This implies that $x_{n+1}=T=1$ and $x_n=a-1$ as $x_n\ne x_{n+1}$, and $a\geq 2$ as $x_n\ne 0,x_{n+1}$. Moreover, $a-1$ divides $a-d$  and $d\ne  1$ or $a$, so $d=1+k(1-a)$ for some $k\geq 1$. 
If $k\geq 2$, then $|1+k(1-a)|>a$ unless $k=2$ and $a=3$, which gives the example $2\to 1 \to -3$.
For $k=1$, we have $2-a$ divides $a$ only if $2-a$ divides $2$ and so $a=3$ or $4$.   The $a=3$ case gives rise to the example $2\to 1 \to -1$. The $a=4$ case does not work, else $2=x_n-x_{n+1}$ would divide $f(x_n)-f(x_{n+1})=x_{n+1}-x_{n+2}  =x_{n+1}-d=3$.
 
 Now $n$ must be $0$ in all of these three examples else  $x_{n-1}\to x_n\to x_{n+1} \to d$ for some $n\geq 1$.
 But $a=3$ with $x_{n-1}\to 2\to 1$ is impossible else  $x_{n-1}-2$ divides $2-1=1$, so that $x_{n-1}=t$ or $x_{n-1}=a$.
 And  $a=2$ with $x_{n-1}\to 1\to 3\to -1$ is impossible else $x_{n-1}-1$ divides $3-1=2$ and $x_{n-1}-3$ divides $1-(-1)=2$, which together imply that $x_{n-1}=2=a$.   
  \hfill \qed

\begin{acknowledgment}{Acknowledgments.} This article was first conceived of in 1997 following invigorating conversations about arithmetic dynamics with Mike Zieve. My understanding of arithmetic dynamics subsequently greatly benefitted from discussions with Jeff Lagarias and Xander Faber. Thanks to them all. I would also like to thank the referees for their many helpful suggestions. \end{acknowledgment}



  \begin{biog} \item[Andrew Granville] (and.granville@gmail.com) specializes in understanding the distribution of primes. He is co-author of the soon-to-appear graphic novel, \emph{MSI: Anatomy and Permutations} (P.U. Press)\smallskip

 \begin{affil}
    D\'epartement de math\'ematiques et de statistique, Universit\'e de Montr\'eal, CP 6128 succ. Centre-Ville, Montr\'eal, QC H3C 3J7, Canada\\
     andrew@dms.umontreal.ca;\ and \\
   Department of Mathematics, University College London,  Gower Street, London, WC1E 6BT, United Kingdom \\   a.granville@ucl.ac.uk  \end{affil} \end{biog} \vfill\eject

\end{document}